\documentclass[11pt, letterpaper]{article}

\usepackage[utf8]{inputenc} % Safe input encoding
\usepackage[T1]{fontenc}    % Proper font encoding for symbols
\usepackage{amsmath, amssymb, amsthm, amsfonts} % Core math packages
\usepackage{mathtools}      % Extensions for amsmath
\usepackage{microtype}      % Improves typographical spacing
\usepackage{graphicx}       % Mandatory for figures/images
\usepackage{cite}           % Better citation handling
\usepackage{algorithm}
\usepackage{algpseudocode}
\usepackage{aliascnt}
\usepackage{subcaption} % For subfigure environment
\usepackage[margin=1in]{geometry} 

\usepackage[hidelinks, colorlinks=true, linkcolor=blue, citecolor=blue, urlcolor=blue]{hyperref}
\usepackage[nameinlink,capitalize,noabbrev]{cleveref}
\newtheorem{theorem}{Theorem}[section]
\newtheorem{lemma}[theorem]{Lemma}

\theoremstyle{definition}
\newtheorem{definition}[theorem]{Definition}

\theoremstyle{remark}
\newtheorem{remark}{Remark}
\newaliascnt{assumption}{theorem}
\newtheorem{assumption}[assumption]{Assumption}
\aliascntresetthe{assumption}

\newcommand{\R}{\mathbb{R}}
\newcommand{\ip}[2]{\left\langle #1,#2\right\rangle}
\newcommand{\norm}[1]{\left\|#1\right\|}
\newcommand{\proj}{P}
\newcommand{\Gap}{\operatorname{Gap}}
\newcommand{\dist}{\operatorname{dist}}
\crefname{theorem}{Theorem}{Theorems}
\Crefname{theorem}{Theorem}{Theorems}
\crefname{lemma}{Lemma}{Lemmas}
\Crefname{lemma}{Lemma}{Lemmas}
\crefname{proposition}{Proposition}{Propositions}
\Crefname{proposition}{Proposition}{Propositions}
\crefname{corollary}{Corollary}{Corollaries}
\Crefname{corollary}{Corollary}{Corollaries}
\crefname{definition}{Definition}{Definitions}
\Crefname{definition}{Definition}{Definitions}
\crefname{assumption}{Assumption}{Assumptions}
\Crefname{assumption}{Assumption}{Assumptions}
\crefname{remark}{Remark}{Remarks}
\Crefname{remark}{Remark}{Remarks}
\crefname{algorithm}{Algorithm}{Algorithms}
\Crefname{algorithm}{Algorithm}{Algorithms}
\title{\textbf{Residual Feedback for Transformed-State Equilibrium Seeking via General Variational Inequalities}%\thanks{This work is supported by the U.S. National Science Foundation under CAREER Award No. ECCS-2439971.}
}

\author{
Griffin Smith%
\thanks{G. Smith is with Applied Mathematics GIDP, The University of Arizona, Tucson, AZ 85721, USA. Email: \texttt{griffinlsmith@arizona.edu}.}
\and
Afrooz Jalilzadeh%
\thanks{A. Jalilzadeh is with the Department of Systems and Industrial Engineering, The University of Arizona, Tucson, AZ 85721, USA. Email: \texttt{afrooz@arizona.edu}.}
}

\date{} % Or hardcode a specific date to prevent updates on re-compilation

\begin{document}

\maketitle

\begin{abstract}
Motivated by networked systems in which equilibrium conditions apply to a regulated state rather than directly to the decision variable, we study transformed-state general variational inequalities. Using a projection-residual reformulation, we develop residual feedback methods that operate in the decision space without inverting the state mapping $H$. For a one-step method, we establish an $\mathcal{O}(T^{-1/2})$ best-iterate residual rate under solution-restricted cocoercivity and linear convergence under solution-restricted strong monotonicity and residual Lipschitz continuity. We also obtain an $\mathcal{O}(T^{-1/2})$ best-iterate residual rate for a predictor-corrector method applied to the induced GVI residual under monotonicity and Lipschitz continuity. The assumptions are imposed directly on the computable residual and are illustrated by affine network and rank-deficient examples. Finally, we extend the residual framework to generalized quasi-variational inequalities with decision-dependent feasible sets.
\end{abstract}

\section{Introduction and Related Work}
\label{sec:introduction}
Control and coordination problems in modern engineered systems often involve equilibrium conditions coupled through physical or operational constraints. Examples arise in transportation and flow networks \cite{Dafermos1980Traffic,Nagurney1999NetworkEconomics}, as well as in multi-agent systems in which local decisions interact through shared constraints \cite{YiPavel2019,bianchi2022fast}. Variational inequalities (VIs) provide a unified mathematical framework for such equilibrium problems in optimization, networks, economics, and control \cite{FacchineiPang2003}. In many control and coordination settings, however, the variable adjusted by a controller, coordinator, or decision maker need not coincide with the state on which feasibility and equilibrium are imposed. Instead, a decision $x$ may generate a regulated state, output, or response $H(x)$, with the admissibility and equilibrium conditions imposed on this transformed state. A representative example is toll design, in which a chosen
toll induces an equilibrium traffic flow
\cite{ChenGupta2022Toll}.
This structure is captured by a \emph{general variational inequality} (GVI): find $x^*$ such that $H(x^*)\in K$ and
\[
    \langle y-H(x^*),Q(x^*)\rangle\geq 0,
    \qquad \forall y\in K.
\]
GVIs of this form, up to notation, have been studied using implicit methods \cite{He1999GVI} and projection-based schemes \cite{NoorWangXiu2002}; projection-type error bounds for this problem class were developed in \cite{XiuZhang2002GVI}. A closely related specialization is the inverse variational inequality (IVI), obtained when the operator in the equilibrium inequality is the original decision variable. Projection-based methods for IVIs were developed in \cite{HeLiu2011}, while more recent work has considered stochastic IVIs with convergence-rate guarantees \cite{AlizadehPolancoJalilzadeh2023} and regularized projection methods for monotone IVIs \cite{SmithAlizadehJalilzadeh2026}. Unlike \cite{SmithAlizadehJalilzadeh2026}, we consider a general pair $(H,Q)$ and unregularized decision-space iterations under conditions imposed on the induced residual. 

Through its projection residual, the GVI becomes a root-finding
problem in decision space, enabling forward and extragradient
iterations under cocoercivity-type or monotone-Lipschitz conditions
\cite{gorbunov2022extragradient,korpelevich1976extragradient};
related schemes appear in stochastic generalized Nash equilibria
\cite{franci2025variance}. Unlike the implicit proximal GVI
framework in \cite{LiLiaoYuan2009}, our method uses explicit
residual steps. It avoids inversion of $H$ and derives verifiable
conditions on $(H,Q)$ for affine network models.

Our decision-dependent extension is related to quasi-variational inequalities (QVIs), which model state-dependent constraints \cite{BensoussanLions1984} and generalized Nash games with coupled feasible sets \cite{Harker1991,AlizadehJalilzadeh2026}. Chan and Pang \cite{ChanPang1982} introduced GQVIs and established existence results. Here, we extend our residual framework to transformed-state GQVIs with decision-dependent feasible sets.

\subsection{Contribution}
We make four contributions. (i) Rather than transforming the problem through $H^{-1}$, we formulate the GVI as a residual equation and develop a one-projection update directly in the decision variable. We establish an $O(T^{-1/2})$ best-iterate residual rate under solution-restricted cocoercivity and linear convergence under solution-restricted strong monotonicity and residual Lipschitz continuity, without requiring invertibility of $H$ or global monotonicity of $H$ and $Q$. (ii) For monotone Lipschitz residuals that need not be cocoercive, we apply a predictor-corrector step and obtain an $O(T^{-1/2})$ best-iterate rate. A rank-deficient example shows that this correction can converge when the one-step method diverges. (iii) For affine network models, we derive directly verifiable spectral conditions on the original matrices that guarantee residual monotonicity, strong monotonicity, and cocoercivity. (iv) Finally, we extend the framework to generalized QVIs with $H(x)\in K(x)$ and give sufficient conditions for Lipschitz continuity of the resulting moving-set residual.
\section{Transformed-State GVI and Projection Residual}
\label{sec:problem-residual}
Let $K\subseteq \R^n$ be nonempty, closed, and convex, and let $H,Q:\R^n\to\R^n$ be continuous. We study the following general variational inequality.
\begin{definition}[General variational inequality]
\label{def:gvi}
The general variational inequality problem, denoted by $\operatorname{GVI}(H,Q,K)$, is to find $x^*\in\R^n$ such that $H(x^*)\in K$ and
\begin{equation}
\ip{y-H(x^*)}{Q(x^*)}\geq 0,
\qquad \forall y\in K.
\label{eq:gvi}
\end{equation}
\end{definition}
The distinction between $x^*$ and $H(x^*)$ is central. In a classical VI, the point satisfying the variational inequality is the decision variable itself. In \Cref{def:gvi}, the decision variable is $x^*$, while the constrained equilibrium point is $H(x^*)$. Hence the model can represent equilibrium conditions involving transformed variables, implicit responses, or state-output maps. If $H(x)=x$, the problem reduces to the classical VI. If, in addition, $Q=\nabla f$ for a differentiable convex function $f$, it becomes the first-order optimality condition for $\min_{x\in K}f(x)$. If $Q(x)=x$, the model becomes an inverse variational inequality. If $H$ is bijective and $F(u)=Q(H^{-1}(u))$, the problem can be transformed into a classical VI in $u=H(x)$. This transformation is often undesirable in network and control settings because $H$ may be noninvertible, expensive to invert, nonsmooth, or available only through an oracle. We focus on the same-dimensional setting in which the transformed residual can be used directly as a decision-space feedback signal. In this interpretation, $x$ is a controller or coordinator signal, $H(x)$ is the resulting network response, and the residual provides an equilibrium-regulation error used to update the decision variable.
The following standard projection characterization \cite{NoorWangXiu2002} motivates our residual feedback law.
\begin{lemma}[Projection characterization]
\label{lem:projection-characterization}
Let $\eta>0$. A point $x^*\in\R^n$ solves $\operatorname{GVI}(H,Q,K)$ if and only if
\begin{equation}
H(x^*)=\proj_K\big(H(x^*)-\eta Q(x^*)\big).
\label{eq:projection-form}
\end{equation}
\end{lemma}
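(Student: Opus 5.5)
The plan is to reduce the claim to the variational characterization of the Euclidean projection onto a nonempty closed convex set: for any $z\in\R^n$ and $u\in K$, we have $u=\proj_K(z)$ if and only if
\begin{equation*}
\ip{y-u}{z-u}\le 0,\qquad \forall y\in K .
\end{equation*}
This is standard, since $\proj_K(z)$ is the unique minimizer of the strongly convex function $\tfrac12\norm{y-z}^2$ over $K$, and the displayed inequality is its first-order optimality condition. The whole proof applies this characterization with $u=H(x^*)$ and $z=H(x^*)-\eta Q(x^*)$. Then $z-u=-\eta Q(x^*)$, and the projection inequality becomes $-\eta\ip{y-H(x^*)}{Q(x^*)}\le 0$ for all $y\in K$.

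For the forward direction, suppose $x^*$ solves $\operatorname{GVI}(H,Q,K)$. Then $H(x^*)\in K$, so $u=H(x^*)$ is an admissible candidate in the characterization. Multiplying \eqref{eq:gvi} by $-\eta<0$ gives exactly $\ip{y-u}{z-u}\le 0$ for all $y\in K$. The characterization then yields $H(x^*)=\proj_K(H(x^*)-\eta Q(x^*))$, which is \eqref{eq:projection-form}.

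For the converse, suppose \eqref{eq:projection-form} holds. Since $\proj_K$ maps into $K$, we get $H(x^*)\in K$ directly, which is the feasibility part of \Cref{def:gvi}. Applying the characterization with $u=\proj_K(z)=H(x^*)$ gives $-\eta\ip{y-H(x^*)}{Q(x^*)}\le0$ for all $y\in K$. Dividing by $-\eta$ and using $\eta>0$ recovers \eqref{eq:gvi}.

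No step is a serious obstacle. The only point needing care is that membership $H(x^*)\in K$ must be checked separately in each direction: in the forward direction it is required before the characterization can be invoked, and in the converse it comes from the range of $\proj_K$. Also, nonemptiness, closedness and convexity of $K$ are exactly what make $\proj_K$ well defined and single-valued. Continuity of $H$ and $Q$ plays no role here.
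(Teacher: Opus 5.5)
Your proof is correct and follows the paper's argument: both apply the variational characterization of $\proj_K$ with $u=H(x^*)$ and $v=H(x^*)-\eta Q(x^*)$, and both use $\eta>0$ to pass between the two inequalities. Your separate check of $H(x^*)\in K$ in each direction simply unpacks the ``$u\in K$'' clause that the paper builds into the characterization.
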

\begin{proof}
Recall that $u=\proj_K(v)$ if and only if $u\in K$ and $\ip{y-u}{v-u}\leq 0$ for all $y\in K$. Let $u=H(x^*)$ and $v=H(x^*)-\eta Q(x^*)$. Then $u=\proj_K(v)$ is equivalent to $H(x^*)\in K$ and $\ip{y-H(x^*)}{-\eta Q(x^*)}\leq 0$ for all $y\in K$. Since $\eta>0$, this is equivalent to \eqref{eq:gvi}.
\end{proof}
Following the standard projection residual for GVIs \cite{NoorWangXiu2002,XiuZhang2002GVI}, we use the following residual.
\begin{definition}[Projection residual]
\label{def:residual}
For $\eta>0$, define $R_\eta:\R^n\to\R^n$ by
\begin{equation}
R_\eta(x):=H(x)-\proj_K\big(H(x)-\eta Q(x)\big).
\label{eq:residual}
\end{equation}
The associated normalized residual measure is $\Gap_\eta(x):=\eta^{-1}\norm{R_\eta(x)}$.
\end{definition}
By \Cref{lem:projection-characterization}, $x^*$ solves $\operatorname{GVI}(H,Q,K)$ if and only if $R_\eta(x^*)=0$. Thus, the GVI can be studied as a residual root-finding problem in the original variable $x$.
\section{Residual Feedback Algorithm and Assumptions}
\label{sec:algorithm-assumptions}
\begin{assumption}[Feasible set and solutions]
\label{ass:K-solution}
The set $K\subseteq\R^n$ is nonempty, closed, and convex. The solution set $X^*:=\{x\in\R^n:R_\eta(x)=0\}$ is nonempty and closed. For each $x\in\R^n$, $P_{X^*}(x)$ denotes the set of Euclidean projections of $x$ onto $X^*$.
\end{assumption}
\begin{assumption}[Residual Lipschitz continuity]
\label{ass:residual-lipschitz}
There exists $L_R>0$ such that
$\norm{R_\eta(x)-R_\eta(y)}\leq L_R\norm{x-y}$ for all $x,y\in\R^n$.
\end{assumption}
\begin{lemma}[A sufficient condition for residual Lipschitz continuity]
\label{lem:residual-lipschitz}
Suppose \Cref{ass:K-solution} holds and $H$ and $Q$ are Lipschitz continuous with constants $L_H>0$ and $L_Q>0$, respectively. Then \Cref{ass:residual-lipschitz} holds with
$L_R=2L_H+\eta L_Q$.
\end{lemma}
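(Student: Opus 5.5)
The plan is to split the residual into its two defining pieces and bound each by the triangle inequality. The key tool is nonexpansiveness of the Euclidean projection onto the nonempty closed convex set $K$ from \Cref{ass:K-solution}: $\norm{\proj_K(u)-\proj_K(v)}\leq\norm{u-v}$ for all $u,v\in\R^n$. This follows from the variational characterization of $\proj_K$ recalled in the proof of \Cref{lem:projection-characterization}. Apply that inequality at $(u,y)$ with test point $v$, and at $(v,y)$ with test point $u$, add the two, and use Cauchy--Schwarz.

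Fix $x,y\in\R^n$. By \Cref{def:residual},
\[
R_\eta(x)-R_\eta(y)=\big(H(x)-H(y)\big)-\Big(\proj_K\big(H(x)-\eta Q(x)\big)-\proj_K\big(H(y)-\eta Q(y)\big)\Big).
\]
The triangle inequality bounds its norm by $\norm{H(x)-H(y)}$ plus the norm of the projection difference. Nonexpansiveness bounds the projection difference by $\norm{H(x)-H(y)-\eta(Q(x)-Q(y))}$. A second application of the triangle inequality bounds this in turn by $\norm{H(x)-H(y)}+\eta\norm{Q(x)-Q(y)}$.

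Collecting terms and using the Lipschitz constants of $H$ and $Q$ gives
\[
\norm{R_\eta(x)-R_\eta(y)}\leq 2\norm{H(x)-H(y)}+\eta\norm{Q(x)-Q(y)}\leq (2L_H+\eta L_Q)\norm{x-y}.
\]
This is \Cref{ass:residual-lipschitz} with $L_R=2L_H+\eta L_Q$, which is positive since $L_H,L_Q,\eta>0$.

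The only substantive step is nonexpansiveness of $\proj_K$; everything else is bookkeeping. Note that the factor $2$ on $L_H$ comes from $H$ appearing both outside and inside the projection, and the argument does not try to exploit cancellation between these two occurrences.
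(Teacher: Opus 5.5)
Your proof is correct and matches the paper's argument: the triangle inequality, nonexpansiveness of $\proj_K$, a second triangle inequality, and then the Lipschitz constants of $H$ and $Q$. One small slip in your side justification of nonexpansiveness is that the test points in the variational characterization must lie in $K$, so you should test with $\proj_K(v)$ and $\proj_K(u)$ rather than with $v$ and $u$ themselves.
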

\begin{proof}
The result follows from the nonexpansiveness of $\proj_K$:
\begin{align}\nonumber
\norm{R_\eta(x)-R_\eta(y)} &\leq \norm{H(x)-H(y)} +\norm{\proj_K(H(x)-\eta Q(x))-\proj_K(H(y)-\eta Q(y))} \\
&\leq 2\norm{H(x)-H(y)}+\eta\norm{Q(x)-Q(y)}.
\end{align}
Lipschitz continuity of $H$ and $Q$ gives the desired result.
\end{proof}
\begin{assumption}[Solution-restricted cocoercivity]
\label{ass:restricted-cocoercive}
There exists $\beta>0$ such that, for every $x\in\R^n$ and every $\bar x\in P_{X^*}(x)$,
$\ip{R_\eta(x)}{x-\bar x}\geq \beta\norm{R_\eta(x)}^2.$
\end{assumption}
\begin{assumption}[Solution-restricted strong monotonicity]
\label{ass:restricted-strong}
There exists $\mu>0$ such that, for every $x\in\R^n$ and every $\bar x\in P_{X^*}(x)$,
$\ip{R_\eta(x)}{x-\bar x}\geq \mu\dist(x,X^*)^2.$
\end{assumption}
For a unique solution, these assumptions reduce to cocoercivity
around the solution and quasi-strong monotonicity
\cite{LoizouEtAl2021}; here they extend to nonunique solution
sets by testing against nearest solutions, without requiring
global monotonicity of the residual.
\begin{remark}[Recovery of VI and IVI conditions]
The residual assumptions recover standard VI and inverse-VI cases. If $H(x)=x$ and $Q$ is $\beta_Q$-cocoercive, then, for $0<\eta<4\beta_Q$, $R_\eta$ is $(1-\eta/(4\beta_Q))$-cocoercive. If $H(x)=x$ and $Q$ is $\mu_Q$-strongly monotone and $L_Q$-Lipschitz, then, for $0<\eta<2\mu_Q/L_Q^2$, $R_\eta$ is strongly monotone with modulus
$1-\sqrt{1-2\eta\mu_Q+\eta^2L_Q^2}.$
For the inverse-VI case $Q(x)=x$, if $H$ is $\mu_H$-strongly monotone and $L_H$-Lipschitz, firm nonexpansiveness of $P_K$ gives
$$\langle R_\eta(x)-R_\eta(y),x-y\rangle
\geq
\left(\mu_H-\frac{L_H^2}{4\eta}\right)\|x-y\|^2.$$
Hence $R_\eta$ is strongly monotone whenever
$\eta>L_H^2/(4\mu_H)$, without requiring $H$ to be invertible. Therefore, ~\Cref{ass:restricted-cocoercive} holds in the first case and ~\Cref{ass:restricted-strong} holds in the latter two; residual Lipschitz continuity also makes the latter two sufficient for ~\Cref{ass:restricted-cocoercive}.
\end{remark}

\begin{algorithm}[htp]
\caption{Residual Feedback for Transformed-State GVI}
\label{alg:gvi-residual}
\begin{algorithmic}[1]
\State Choose $x_0\in\R^n$, projection parameter $\eta>0$, and stepsize $\alpha>0$.
\For{$k=0,1,2,\ldots$}
    \State $z_k=\proj_K\big(H(x_k)-\eta Q(x_k)\big)$.
    \State $R_\eta(x_k)=H(x_k)-z_k$.
    \State $x_{k+1}=x_k-\alpha R_\eta(x_k)$.
\EndFor
\end{algorithmic}
\end{algorithm}
\Cref{alg:gvi-residual} uses the residual as a correction direction: $x_{k+1}=x_k-\alpha R_\eta(x_k)$. Each iteration requires one projection onto $K$ and no inversion of $H$, and its fixed points are precisely the solutions of the GVI.
\subsection{A predictor-corrector variant for monotone residuals}
\label{subsec:monotone-eg}
The residual-correction method in \Cref{alg:gvi-residual} requires a cocoercivity-type condition to guarantee descent. When only monotonicity of the residual is available, in \Cref{alg:gvi-eg} we apply the classical extragradient construction~\cite{korpelevich1976extragradient} to the unconstrained residual equation $R_\eta(x)=0$. Modern analyses of extragradient methods establish residual guarantees for general monotone Lipschitz operators~\cite{gorbunov2022extragradient}, while  \cite{franci2025variance} considers extragradient variants for stochastic equilibrium problems. Here, the operator is not a primitive VI mapping but the projection residual induced by the transformed-state GVI.
\begin{assumption}[Monotonicity of the residual]
\label{ass:residual-monotone}
The residual mapping $R_\eta$ is monotone; that is, for all $x,y\in\R^n$,
$\ip{R_\eta(x)-R_\eta(y)}{x-y}\geq 0.$
\end{assumption}
\begin{algorithm}[htp]
\caption{Predictor-Corrector Residual Feedback Method}
\label{alg:gvi-eg}
\begin{algorithmic}[1]
\State Choose $x_0\in\R^n$, projection parameter $\eta>0$, and stepsize $\alpha>0$.
\For{$k=0,1,2,\ldots$}
    \State Compute the predictor $\widehat x_k=x_k-\alpha R_\eta(x_k)$.
    \State Compute the corrected update $x_{k+1}=x_k-\alpha R_\eta(\widehat x_k)$.
\EndFor
\end{algorithmic}
\end{algorithm}
\section{Convergence Analysis}
\label{sec:convergence}
We now establish convergence guarantees for the two residual feedback methods under the residual-level assumptions introduced in \Cref{sec:algorithm-assumptions}.
\begin{theorem}[Sublinear residual convergence]
\label{thm:sublinear}
Suppose \Cref{ass:K-solution,ass:restricted-cocoercive} hold. Let $\{x_k\}$ be generated by \Cref{alg:gvi-residual} with constant stepsize $\alpha\in(0,2\beta)$. Then, for every $k\geq 0$,
\begin{equation}
\dist(x_{k+1},X^*)^2
\leq
\dist(x_k,X^*)^2-\alpha(2\beta-\alpha)\norm{R_\eta(x_k)}^2.
\label{eq:fejer}
\end{equation}
Consequently, for every $T\geq 1$,
\begin{equation}
\min_{0\leq k\leq T-1}\Gap_\eta(x_k)^2
\leq
\frac{\dist(x_0,X^*)^2}{\eta^2\alpha(2\beta-\alpha)T}.
\label{eq:sublinear-rate}
\end{equation}
Thus, $\min_{0\leq k\leq T-1}\Gap_\eta(x_k)=\mathcal{O}(T^{-1/2})$ and hence $\norm{R_\eta(x_k)}\to0$
\end{theorem}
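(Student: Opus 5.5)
The plan is the standard Fejér-monotonicity argument, carried out against a nearest solution of the current iterate. Fix $k\ge 0$ and pick any $\bar x_k\in P_{X^*}(x_k)$; this set is nonempty because $X^*$ is nonempty and closed by \Cref{ass:K-solution}. Since $\bar x_k\in X^*$, the distance from $x_{k+1}$ to $X^*$ is at most its distance to this particular point:
\[
\dist(x_{k+1},X^*)^2\le \norm{x_{k+1}-\bar x_k}^2 .
\]
Note that $X^*$ need not be convex, so we do not need the projection to be unique or nonexpansive. We only use that $\bar x_k$ is some point of $X^*$ attaining $\dist(x_k,X^*)$.

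Next, expand the update $x_{k+1}=x_k-\alpha R_\eta(x_k)$:
\[
\norm{x_{k+1}-\bar x_k}^2=\norm{x_k-\bar x_k}^2-2\alpha\ip{R_\eta(x_k)}{x_k-\bar x_k}+\alpha^2\norm{R_\eta(x_k)}^2 .
\]
Here $\norm{x_k-\bar x_k}^2=\dist(x_k,X^*)^2$. Apply \Cref{ass:restricted-cocoercive} with $x=x_k$ and $\bar x=\bar x_k$ to bound the cross term below by $\beta\norm{R_\eta(x_k)}^2$. The right-hand side is then at most
\[
\dist(x_k,X^*)^2-(2\alpha\beta-\alpha^2)\norm{R_\eta(x_k)}^2,
\]
which is exactly \eqref{eq:fejer}. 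The condition $\alpha\in(0,2\beta)$ makes the coefficient $\alpha(2\beta-\alpha)$ positive.

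For the rate, sum \eqref{eq:fejer} over $k=0,\dots,T-1$. The distances telescope, and dropping the nonnegative term $\dist(x_T,X^*)^2$ gives
\[
\alpha(2\beta-\alpha)\sum_{k=0}^{T-1}\norm{R_\eta(x_k)}^2\le \dist(x_0,X^*)^2 .
\]
The minimum of the terms is at most their average. Dividing by $\eta^2$ and using $\Gap_\eta=\eta^{-1}\norm{R_\eta}$ then yields \eqref{eq:sublinear-rate}, and taking square roots gives the $\mathcal{O}(T^{-1/2})$ statement.

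The series bound also shows that $\sum_k\norm{R_\eta(x_k)}^2<\infty$, because the partial sums are bounded uniformly in $T$. Hence $\norm{R_\eta(x_k)}\to0$ along the whole sequence, not only along the best iterate.

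There is no real obstacle in this argument. The only point needing care is the first step: we must compare $x_{k+1}$ with the projection of $x_k$, not with a projection of $x_{k+1}$. This is what lets \Cref{ass:restricted-cocoercive} be applied at $x_k$ without any convexity of $X^*$.
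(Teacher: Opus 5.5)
Your proposal is correct and follows the paper's proof essentially step for step: compare $x_{k+1}$ with a nearest solution $\bar x_k\in P_{X^*}(x_k)$, expand the update, apply \Cref{ass:restricted-cocoercive} at $x_k$, then telescope, bound the minimum by the average, and use summability to get $\norm{R_\eta(x_k)}\to 0$. You also note that $P_{X^*}(x_k)$ is nonempty because $X^*$ is closed, and that neither convexity of $X^*$ nor uniqueness of the projection is needed; this is a small clarification the paper leaves implicit.
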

\begin{proof}
Let $\bar x_k\in P_{X^*}(x_k)$. Since $\bar x_k\in X^*$, we have
$R_\eta(\bar x_k)=0$. Moreover, by the definition of the projection onto
$X^*$, $\norm{x_k-\bar x_k}=\dist(x_k,X^*)$. Using the update
$x_{k+1}=x_k-\alpha R_\eta(x_k)$, we obtain
\begin{align}
\dist(x_{k+1},X^*)^2
&\leq \norm{x_{k+1}-\bar x_k}^2 = \norm{x_k-\alpha R_\eta(x_k)-\bar x_k}^2 \notag\\
&= \norm{x_k-\bar x_k}^2
-2\alpha\ip{R_\eta(x_k)}{x_k-\bar x_k}
+\alpha^2\norm{R_\eta(x_k)}^2 \notag\\
&= \dist(x_k,X^*)^2
-2\alpha\ip{R_\eta(x_k)}{x_k-\bar x_k}
+\alpha^2\norm{R_\eta(x_k)}^2.
\label{eq:basic-expansion}
\end{align}
By \Cref{ass:restricted-cocoercive}, we have
\begin{equation}
\ip{R_\eta(x_k)}{x_k-\bar x_k}
\geq
\beta\norm{R_\eta(x_k)}^2.
\end{equation}
Substituting this bound into \eqref{eq:basic-expansion} gives
\begin{align}
\dist(x_{k+1},X^*)^2
&\leq
\dist(x_k,X^*)^2
-2\alpha\beta\norm{R_\eta(x_k)}^2 +\alpha^2\norm{R_\eta(x_k)}^2 \notag\\
&=
\dist(x_k,X^*)^2
-\alpha(2\beta-\alpha)\norm{R_\eta(x_k)}^2.
\end{align}
This proves \eqref{eq:fejer}. Since $\alpha\in(0,2\beta)$, the coefficient
$\alpha(2\beta-\alpha)$ is positive.
Now summing \eqref{eq:fejer} from $k=0$ to $T-1$ yields
\begin{align}
\alpha(2\beta-\alpha)\sum_{k=0}^{T-1}\norm{R_\eta(x_k)}^2
&\leq
\sum_{k=0}^{T-1}
\left(\dist(x_k,X^*)^2-\dist(x_{k+1},X^*)^2\right) \notag \\
&=
\dist(x_0,X^*)^2-\dist(x_T,X^*)^2
\leq
\dist(x_0,X^*)^2.
\label{eq:sum-residual-bound}
\end{align}
Therefore,
\begin{equation}
\sum_{k=0}^{T-1}\norm{R_\eta(x_k)}^2
\leq
\frac{\dist(x_0,X^*)^2}{\alpha(2\beta-\alpha)}.
\end{equation}
Letting $T\to\infty$ gives $\sum_{k=0}^{\infty}\norm{R_\eta(x_k)}^2<\infty$, and hence $\norm{R_\eta(x_k)}\to0$.
Since the minimum is bounded by the average, we have
\begin{align}
\min_{0\leq k\leq T-1}\norm{R_\eta(x_k)}^2
&\leq
\frac{1}{T}\sum_{k=0}^{T-1}\norm{R_\eta(x_k)}^2 
\leq
\frac{\dist(x_0,X^*)^2}{\alpha(2\beta-\alpha)T}.
\end{align}
Finally, using $\Gap_\eta(x)=\eta^{-1}\norm{R_\eta(x)}$, we obtain
\begin{align}
\min_{0\leq k\leq T-1}\Gap_\eta(x_k)^2
&=
\frac{1}{\eta^2}
\min_{0\leq k\leq T-1}\norm{R_\eta(x_k)}^2
\leq
\frac{\dist(x_0,X^*)^2}
{\eta^2\alpha(2\beta-\alpha)T}.
\end{align}
This proves \eqref{eq:sublinear-rate}. Taking square roots gives
$\min_{0\leq k\leq T-1}\Gap_\eta(x_k)=\mathcal{O}(T^{-1/2})$.
\end{proof}
The preceding result shows that solution-restricted cocoercivity is sufficient to guarantee a sublinear residual rate for the one-step residual feedback method. We next show that if the residual additionally satisfies solution-restricted strong monotonicity and is Lipschitz continuous, then the same iteration converges linearly in distance to the solution set.
\begin{theorem}[Linear convergence]
\label{thm:linear}
Suppose \Cref{ass:K-solution,ass:residual-lipschitz,ass:restricted-strong} hold. Let $L_R$ denote the Lipschitz constant of $R_\eta$ in \Cref{ass:residual-lipschitz}; for example, \Cref{lem:residual-lipschitz} gives $L_R=2L_H+\eta L_Q$ when $H$ and $Q$ are Lipschitz continuous. Let $\{x_k\}$ be generated by \Cref{alg:gvi-residual} with constant stepsize $\alpha\in(0,2\mu/L_R^2)$. Then, for every $k\geq 0$,
\begin{align}\nonumber
&\dist(x_{k+1},X^*)^2
\leq
\rho\dist(x_k,X^*)^2,\\
&\rho:=1-2\alpha\mu+\alpha^2L_R^2.
\label{eq:linear-rate}
\end{align}
Moreover, $\rho<1$; if $X^*\neq\R^n$, then $\rho\in[0,1)$. In particular, choosing $\alpha=\mu/L_R^2$ gives $\dist(x_{k+1},X^*)^2\leq (1-\mu^2/L_R^2)\dist(x_k,X^*)^2$.
\end{theorem}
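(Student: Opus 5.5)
We follow the same template as the proof of \Cref{thm:sublinear}. Fix $k$ and pick $\bar x_k\in P_{X^*}(x_k)$, so that $R_\eta(\bar x_k)=0$ and $\norm{x_k-\bar x_k}=\dist(x_k,X^*)$.

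Since $\bar x_k\in X^*$, we have $\dist(x_{k+1},X^*)\le\norm{x_{k+1}-\bar x_k}$. Expanding with $x_{k+1}=x_k-\alpha R_\eta(x_k)$ gives the identity \eqref{eq:basic-expansion}:
\[
\dist(x_{k+1},X^*)^2\le \dist(x_k,X^*)^2-2\alpha\ip{R_\eta(x_k)}{x_k-\bar x_k}+\alpha^2\norm{R_\eta(x_k)}^2 .
\]
We bound the two trailing terms separately.

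\emph{Inner-product term.} \Cref{ass:restricted-strong} gives
\[
\ip{R_\eta(x_k)}{x_k-\bar x_k}\ge\mu\,\dist(x_k,X^*)^2 .
\]

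\emph{Quadratic term.} Since $R_\eta(\bar x_k)=0$, \Cref{ass:residual-lipschitz} gives
\[
\norm{R_\eta(x_k)}=\norm{R_\eta(x_k)-R_\eta(\bar x_k)}\le L_R\norm{x_k-\bar x_k}=L_R\,\dist(x_k,X^*).
\]

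Substituting both bounds yields $\dist(x_{k+1},X^*)^2\le(1-2\alpha\mu+\alpha^2L_R^2)\dist(x_k,X^*)^2$, which is \eqref{eq:linear-rate}.

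\emph{Properties of $\rho$.}
\begin{itemize}
\item $\rho<1$ holds iff $\alpha(\alpha L_R^2-2\mu)<0$, which is exactly $\alpha\in(0,2\mu/L_R^2)$.
\item $\rho\ge0$ needs more care, and this is the one step requiring an extra observation. If $X^*\ne\R^n$, pick $x\notin X^*$ and $\bar x\in P_{X^*}(x)$, so $d:=\dist(x,X^*)>0$. Cauchy--Schwarz and the Lipschitz bound give
\[
\mu d^2\le\ip{R_\eta(x)}{x-\bar x}\le\norm{R_\eta(x)}\,d\le L_R d^2,
\]
so $\mu\le L_R$. Then
\[
\rho=(1-\alpha\mu)^2+\alpha^2(L_R^2-\mu^2)\ge0 .
\]
Alternatively, if $\rho$ were negative, the displayed inequality would force $\dist(x_{k+1},X^*)^2<0$ whenever $x_k\notin X^*$, which is impossible. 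Either route works; we would use the first because it is self-contained.
\item For $\alpha=\mu/L_R^2$, which lies in the admissible interval, direct substitution gives $\rho=1-2\mu^2/L_R^2+\mu^2/L_R^2=1-\mu^2/L_R^2$.
\end{itemize}
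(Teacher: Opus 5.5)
Your proof is correct and follows the paper's argument essentially step for step. You use the same expansion, the same bounds from restricted strong monotonicity and from Lipschitz continuity with $R_\eta(\bar x_k)=0$, and the same derivation of $\mu\le L_R$ from a point $x\notin X^*$; the only cosmetic difference is that you certify $\rho\ge0$ via $\rho=(1-\alpha\mu)^2+\alpha^2(L_R^2-\mu^2)$, whereas the paper minimizes the quadratic in $\alpha$ at $\alpha=\mu/L_R^2$ to get $\rho\ge1-\mu^2/L_R^2\ge0$.
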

\begin{proof}
Let $\bar x_k\in P_{X^*}(x_k)$. Using the update and the same expansion as in \eqref{eq:basic-expansion},
\begin{align}
\dist(x_{k+1},X^*)^2
&\leq \dist(x_k,X^*)^2
-2\alpha\ip{R_\eta(x_k)}{x_k-\bar x_k} +\alpha^2\norm{R_\eta(x_k)}^2.
\end{align}
By \Cref{ass:restricted-strong} we bound the second term:
\[
-2\alpha\ip{R_\eta(x_k)}{x_k-\bar x_k} \leq -2\alpha \mu \dist(x_k,X^*)^2.
\]
By \Cref{ass:residual-lipschitz} we bound the last term using the Lipschitz constant $L_R$:
\begin{align*}
\norm{R_\eta(x_k) - R_\eta(\bar x_k)} \leq L_R \norm{x_k - \bar x_k}.
\end{align*}
As $\bar x_k$ is an element of $X^*$ we have $R_\eta(\bar x_k)=0$:
\begin{align*}
\norm{R_\eta(x_k)} \leq L_R \norm{x_k - \bar x_k}.
\end{align*}
Thus we get
\begin{align*}
\dist(x_{k+1},X^*)^2
&\leq \dist(x_k,X^*)^2
-2\alpha\mu\dist(x_k,X^*)^2
+ \alpha^2 L_R^2\dist(x_k,X^*)^2\\
&\leq (1 - 2\alpha\mu + \alpha^2L_R^2)\dist(x_k,X^*)^2.
\end{align*}
Letting $\rho = 1 - 2\alpha\mu + \alpha^2L_R^2$ yields the desired result. Since $\alpha\in(0,2\mu/L_R^2)$, we have $\rho<1$.
If $X^*\neq\R^n$, choose $x\notin X^*$ and $\bar x\in P_{X^*}(x)$. By \Cref{ass:restricted-strong}, the Cauchy--Schwarz inequality, and the Lipschitz continuity of $R_\eta$, we have
$
\mu \operatorname{dist}(x,X^*)^2
\leq
\langle R_\eta(x),x-\bar{x}\rangle
\leq
\norm{R_\eta(x)}\operatorname{dist}(x,X^*)
\leq
L_R\operatorname{dist}(x,X^*)^2.
$
Since $\operatorname{dist}(x,X^*)>0$, it follows that $\mu\leq L_R$. The quadratic $1-2\alpha\mu+\alpha^2L_R^2$ is minimized at $\alpha=\mu/L_R^2$, and therefore
$\rho\geq1-\mu^2/L_R^2\geq0$.
Hence $\rho\in[0,1)$. Choosing $\alpha=\mu/L_R^2$ gives $\rho=1-\mu^2/L_R^2$.
\end{proof}
The next theorem shows that the additional correction step in \Cref{alg:gvi-eg} compensates for the lack of cocoercivity. Monotonicity and Lipschitz continuity of the residual mapping are sufficient to obtain a nonasymptotic residual bound.
\begin{theorem}[Residual convergence under monotonicity]
\label{thm:monotone-eg}
Suppose \Cref{ass:K-solution,ass:residual-lipschitz,ass:residual-monotone} hold. Let $\{x_k\}$ be generated by \Cref{alg:gvi-eg} with constant stepsize $\alpha\in(0,1/L_R)$. Then, for every $k\geq 0$,
\begin{equation}
\dist(x_{k+1},X^*)^2
\leq
\dist(x_k,X^*)^2
-\alpha^2(1-\alpha^2L_R^2)\norm{R_\eta(x_k)}^2.
\label{eq:eg-descent}
\end{equation}
Consequently, for every $T\geq 1$,
\begin{equation}
\min_{0\leq k\leq T-1}\Gap_\eta(x_k)^2
\leq
\frac{\dist(x_0,X^*)^2}
{\eta^2\alpha^2(1-\alpha^2L_R^2)T}.
\label{eq:eg-rate}
\end{equation}
Thus, $\min_{0\leq k\leq T-1}\Gap_\eta(x_k)=\mathcal{O}(T^{-1/2})$. Moreover, $\norm{R_\eta(x_k)}\to0$.
\end{theorem}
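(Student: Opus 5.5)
We follow the standard extragradient analysis for a monotone Lipschitz operator, applied to the unconstrained residual equation $R_\eta(x)=0$, as in the proof of \Cref{thm:sublinear}. Fix $k$, let $\bar x_k\in P_{X^*}(x_k)$, and write $r_k:=R_\eta(x_k)$ and $\hat r_k:=R_\eta(\widehat x_k)$. Since $\dist(x_{k+1},X^*)\le\norm{x_{k+1}-\bar x_k}$, it suffices to bound $\norm{x_{k+1}-\bar x_k}^2$.

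The first step is to expand this quantity using $x_{k+1}=x_k-\alpha\hat r_k$. The key identity is
\begin{equation*}
\norm{x_{k+1}-\bar x_k}^2=\norm{x_k-\bar x_k}^2-2\alpha\ip{\hat r_k}{\widehat x_k-\bar x_k}-\norm{x_k-\widehat x_k}^2+\norm{x_{k+1}-\widehat x_k}^2 .
\end{equation*}
To obtain it, split $x_k-\bar x_k=(x_k-\widehat x_k)+(\widehat x_k-\bar x_k)$. Then use $x_k-\widehat x_k=\alpha r_k$ and complete the square: $\alpha^2\norm{\hat r_k}^2-2\alpha^2\ip{\hat r_k}{r_k}=\norm{\alpha(r_k-\hat r_k)}^2-\alpha^2\norm{r_k}^2$. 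Also note that $x_{k+1}-\widehat x_k=\alpha(r_k-\hat r_k)$.

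Next, bound the individual terms.
\begin{itemize}
\item The middle term: \Cref{ass:residual-monotone} applied with $y=\bar x_k$, together with $R_\eta(\bar x_k)=0$, gives $\ip{\hat r_k}{\widehat x_k-\bar x_k}\ge0$. So this term can be dropped.
\item The last term: \Cref{ass:residual-lipschitz} gives $\norm{x_{k+1}-\widehat x_k}=\alpha\norm{r_k-\hat r_k}\le\alpha L_R\norm{x_k-\widehat x_k}=\alpha^2L_R\norm{r_k}$.
\end{itemize}
Combining these with $\norm{x_k-\widehat x_k}^2=\alpha^2\norm{r_k}^2$ yields
\begin{equation*}
\dist(x_{k+1},X^*)^2\le\dist(x_k,X^*)^2-\alpha^2(1-\alpha^2L_R^2)\norm{r_k}^2,
\end{equation*}
which is \eqref{eq:eg-descent}. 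The coefficient is positive because $\alpha<1/L_R$.

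The rate \eqref{eq:eg-rate} then follows exactly as in \Cref{thm:sublinear}. Telescope from $k=0$ to $T-1$ and drop $\dist(x_T,X^*)^2\ge0$. Bound the minimum by the average and divide by $\eta^2$ to pass to $\Gap_\eta$. Finally, summability of $\norm{R_\eta(x_k)}^2$ gives $\norm{R_\eta(x_k)}\to0$.

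The main obstacle is the bookkeeping in the first identity: the cross terms must be arranged so that monotonicity is used only at the predictor $\widehat x_k$ against a root, while the Lipschitz bound controls $\norm{r_k-\hat r_k}$ by $\norm{x_k-\widehat x_k}$. I would verify the identity by direct expansion of both sides in terms of $r_k$, $\hat r_k$ and $x_k-\bar x_k$.
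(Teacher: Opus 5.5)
Your proposal is correct and follows essentially the same route as the paper. It splits $x_k-\bar x_k=\alpha R_\eta(x_k)+(\widehat x_k-\bar x_k)$, drops $\ip{R_\eta(\widehat x_k)}{\widehat x_k-\bar x_k}\ge 0$ by monotonicity against a root, completes the square, bounds $\norm{R_\eta(\widehat x_k)-R_\eta(x_k)}\le \alpha L_R\norm{R_\eta(x_k)}$, and then telescopes; your identity is the paper's algebra rewritten via the displacements $x_k-\widehat x_k$ and $x_{k+1}-\widehat x_k$, and it checks out by direct expansion.
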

\begin{proof}
Let $\bar x_k\in P_{X^*}(x_k)$. Since $\bar x_k\in X^*$, we have $R_\eta(\bar x_k)=0$. The update of \Cref{alg:gvi-eg} gives
\begin{align}
\dist(x_{k+1},X^*)^2
&\leq
\norm{x_{k+1}-\bar x_k}^2
=
\norm{x_k-\bar x_k-\alpha R_\eta(\widehat x_k)}^2 \notag \\
&=
\dist(x_k,X^*)^2
-2\alpha\ip{R_\eta(\widehat x_k)}{x_k-\bar x_k}
+
\alpha^2\norm{R_\eta(\widehat x_k)}^2 .
\label{eq:eg-proof-1}
\end{align}
Consider the cross term $\ip{R_\eta(\hat x_k)}{x_k - \bar x_k}$ and rewrite
\[x_k - \bar x_k = x_k - \hat x_k + \hat x_k - \bar x_k = \alpha R_\eta(x_k) + (\hat x_k - \bar x_k).\]
Then
\[
\ip{R_\eta(\hat x_k)}{x_k - \bar x_k} = \alpha \ip{R_\eta(\hat x_k)}{R_\eta(x_k)} + \ip{R_\eta(\hat x_k)}{\hat x_k - \bar x_k}.
\]
This second term may be bounded from below using \Cref{ass:residual-monotone}.
As $R_\eta(\bar x_k)=0$ we write
\[
\ip{R_\eta(\hat x_k)}{\hat x_k - \bar x_k} = \ip{R_\eta(\hat x_k) - R_\eta(\bar x_k)}{\hat x_k - \bar x_k} \geq 0.
\]
Combining with \Cref{eq:eg-proof-1},
\begin{align}
\nonumber \dist(x_{k+1}, X^*)^2
&\leq \dist(x_k,X^*)^2 - 2\alpha^2\ip{R_\eta(\hat x_k)}{R_\eta(x_k)} + \alpha^2 \norm{R_\eta(\hat x_k)}^2.
\end{align}
Observe that
\[
-2 \ip{R_\eta(x_k)}{R_\eta(\hat x_k)} + \norm{R_\eta(\hat x_k)}^2
= \norm{R_\eta(\hat x_k)-R_\eta(x_k)}^2 - \norm{R_\eta(x_k)}^2.
\]
The norm squared of the residuals is bounded using \Cref{ass:residual-lipschitz} to achieve
\begin{align}
\nonumber \dist(x_{k+1}, X^*)^2
&\leq \dist(x_k,X^*)^2 - \alpha^2(1 - \alpha^2L_R^2) \norm{R_\eta(x_k)}^2.
\end{align}
We proceed in a similar way to \Cref{thm:sublinear}.
Summing from $k=0$ to $k=T-1$,
\begin{align}
\nonumber &\alpha^2(1 - \alpha^2L_R^2) \sum_{k=0}^{T-1} \norm{R_\eta(x_k)}^2
\leq \dist(x_0,X^*)^2 - \dist(x_T,X^*)^2
\leq \dist(x_0,X^*)^2.
\end{align}
Letting $T\to\infty$ gives $\sum_{k=0}^{\infty}\norm{R_\eta(x_k)}^2<\infty$, and hence $\norm{R_\eta(x_k)}\to0$.
As the minimum is bounded by the average,
\begin{align}
\nonumber \min_{0 \le k \le T-1} \|R_\eta(x_k)\|^2
&\le \frac{1}{T} \sum_{k=0}^{T-1} \|R_\eta(x_k)\|^2
\le \frac{\dist(x_0, X^*)^2}{\alpha^2 (1 - \alpha^2 L_R^2) T}.
\end{align}
Finally using $\Gap_\eta(x)=\eta^{-1}\norm{R_\eta(x)}$ we obtain the desired result
\begin{equation*}
\min_{0\leq k\leq T-1}\Gap_\eta(x_k)^2
\leq
\frac{\dist(x_0,X^*)^2}
{\eta^2\alpha^2(1-\alpha^2L_R^2)T}.
\end{equation*}
Taking the square root gives $\min_{0\leq k\leq T-1}\Gap_\eta(x_k)=\mathcal{O}(T^{-\tfrac{1}{2}})$.
\end{proof}
\begin{remark}
\label{rem:monotone-vs-cocoercive}
Unlike the one-step method in \Cref{alg:gvi-residual}, \Cref{alg:gvi-eg} requires monotonicity and Lipschitz continuity of $R_\eta$ rather than solution-restricted cocoercivity, at the cost of one additional residual evaluation per iteration. The numerical example in \Cref{sec:numerics} illustrates a monotone non-cocoercive setting where the one-step method diverges while the predictor-corrector method converges.
\end{remark}
\section{Verifiable Conditions and Network Example}
\label{sec:example}
We now give a network setting in which the residual assumptions can be verified directly. Such models arise naturally in traffic and network-flow equilibrium problems \cite{Dafermos1980Traffic,Nagurney1999NetworkEconomics}.

\textbf{Network-flow equilibrium with directly verifiable residual conditions.}
Let $H(x)=Ax+a$ denote the realized network state and let
$Q(x)=Bx+b$ represent a marginal cost, congestion, price, or equilibrium
residual. Consider the nonempty network-feasibility set
\begin{equation}
K=\{u\in\R^n:Mu=d,\;0\leq u\leq c\},
\label{eq:polyhedral-K}
\end{equation}
where $Mu=d$ imposes flow-conservation constraints and
$0\leq u\leq c$ represents capacity constraints. The residual is
\begin{align}
R_\eta(x)
=
Ax+a-\proj_K\big((A-\eta B)x+a-\eta b\big).
\end{align}
For any matrix $C$, let
$\operatorname{sym}(C):=(C+C^\top)/2$.
By nonexpansiveness of $\proj_K$, for all $x,y\in\R^n$,
\begin{align*}
\|R_\eta(x)-R_\eta(y)\|
&\leq
\bigl(\|A\|+\|A-\eta B\|\bigr)\|x-y\|.
\end{align*}
Thus, $R_\eta$ is Lipschitz continuous with
$L_R:=\|A\|+\|A-\eta B\|.$
To establish monotonicity, let $\Delta x:=x-y$ and
\begin{align*}
\Delta p
:={}&
\proj_K\big((A-\eta B)x+a-\eta b\big)
-
\proj_K\big((A-\eta B)y+a-\eta b\big).
\end{align*}
Firm nonexpansiveness of $\proj_K$ gives
$\|\Delta p\|^2
\leq
\langle \Delta p,(A-\eta B)\Delta x\rangle,$
or, equivalently,
\[
\left\|
\Delta p-\frac12(A-\eta B)\Delta x
\right\|
\leq
\frac12\|(A-\eta B)\Delta x\|.
\]
Hence, using Cauchy–Schwarz we obtain,
\begin{align*}
\langle\Delta p,\Delta x\rangle
&=
\frac12\langle(A-\eta B)\Delta x,\Delta x\rangle
+
\left\langle
\Delta p-\frac12(A-\eta B)\Delta x,\Delta x
\right\rangle\\
&\leq
\frac12\langle(A-\eta B)\Delta x,\Delta x\rangle
+
\frac12\|(A-\eta B)\Delta x\|\,\|\Delta x\|,
\end{align*}
Using
$R_\eta(x)-R_\eta(y)=A\Delta x-\Delta p$, we obtain
\begin{align*}
\langle R_\eta(x)-R_\eta(y),\Delta x\rangle
&\geq
\frac12\langle(A+\eta B)\Delta x,\Delta x\rangle
-\frac12\|(A-\eta B)\Delta x\|\,\|\Delta x\| \\
&\geq
\frac12\lambda_{\min}\!\big(\operatorname{sym}(A+\eta B)\big)
\|\Delta x\|^2-
\frac12\|A-\eta B\|\|\Delta x\|^2\\
&=
\mu_R\|\Delta x\|^2,
\end{align*}
where
$\mu_R
:=
\frac12\left(
\lambda_{\min}\!\big(\operatorname{sym}(A+\eta B)\big)
-\|A-\eta B\|
\right).$
Therefore, if
\begin{equation}
\lambda_{\min}\!\big(\operatorname{sym}(A+\eta B)\big)
\geq
\|A-\eta B\|,
\label{eq:network-monotonicity-condition}
\end{equation}
then $R_\eta$ is monotone. Provided the solution set is nonempty, this and
Lipschitz continuity verify the assumptions of \Cref{thm:monotone-eg}.
If \eqref{eq:network-monotonicity-condition} is strict, then
$\mu_R>0$ and $R_\eta$ is strongly monotone. Under \Cref{ass:K-solution},
strong monotonicity implies that the zero of $R_\eta$ is unique, and
\Cref{ass:restricted-strong} holds. Moreover,
\begin{align*}
\langle R_\eta(x)-R_\eta(y),x-y\rangle
&\geq
\frac{\mu_R}{L_R^2}
\|R_\eta(x)-R_\eta(y)\|^2.
\end{align*}
Thus, \Cref{ass:restricted-cocoercive} holds with
\[
\beta_R
=
\frac{
\lambda_{\min}\!\big(\operatorname{sym}(A+\eta B)\big)
-\|A-\eta B\|
}{
2\bigl(\|A\|+\|A-\eta B\|\bigr)^2
}.
\]
Two useful special cases illustrate the condition. If $B=\gamma A$,
$\gamma>0$, and $A=A^\top\succ0$, then
\eqref{eq:network-monotonicity-condition} reduces to
$(1+\eta\gamma)\lambda_{\min}(A)
\geq
|1-\eta\gamma|\,\|A\|,$
which holds strictly, in particular, when $\eta\gamma=1$. This case arises
when marginal costs are proportional to the realized network state.
More simply, if $A=\nu I$ and $B=\gamma I$ with $\nu,\gamma>0$, then the
condition is strict for every $\eta>0$, with
$\mu_R=\min\{\nu,\eta\gamma\}.$

\section{Numerical Illustration}
\label{sec:numerics}

{\bf Example 1.} We first consider the network model of Section \ref{sec:example} on a connected directed graph with $n_v=5$ nodes and $m=8$ arcs. Its incidence matrix is denoted by $M$, and
\begin{gather*}
u^\star=(1.5,1,0.5,1,1,0.5,1,1)^\intercal, \quad
d=(-3,2,1,2,-2)^\intercal,\\
c=(2,2,2,1,1,1,2,2)^\intercal, \quad
a=u^\star,\quad
b=(-1,-2,-3,-2,-2,2,-1,-3)^\intercal .
\end{gather*}
Let $W$ be the directed line-graph adjacency matrix, where $W_{ij}=1$ when the head of arc $i$ is the tail of arc $j$, and define
\[
G=\frac{W-W^\intercal}{\norm{W-W^\intercal}},
\qquad
A_{ii}=1+\frac{0.2(i-1)}{m-1}.
\]
We set $\eta=1$, $B=A+G$, $H(x)=Ax+a$, and $Q(x)=Bx+b$. Since $\norm{G}=1$, we get
${\mu}_R
=\frac{\lambda_{\min}(\operatorname{sym}(A+B))-\norm{A-B}}{2}
=\frac12,$ and $
 L_R=\norm A+\norm{A-B}=2.2.$
Thus, the residual is strongly monotone and Lipschitz continuous. The stepsize $\alpha=0.1860$ satisfies both
$\alpha<2{\mu}_R/ L_R^2$ and
$\alpha<1/ L_R$. Figure~\ref{fig:smOracle} compares the two methods using residual evaluations, accounting for the additional evaluation required by the predictor-corrector method.
\begin{figure}[t]
    \centering
    \begin{subfigure}[t]{0.4\columnwidth}
        \centering
        \includegraphics[width=\linewidth]{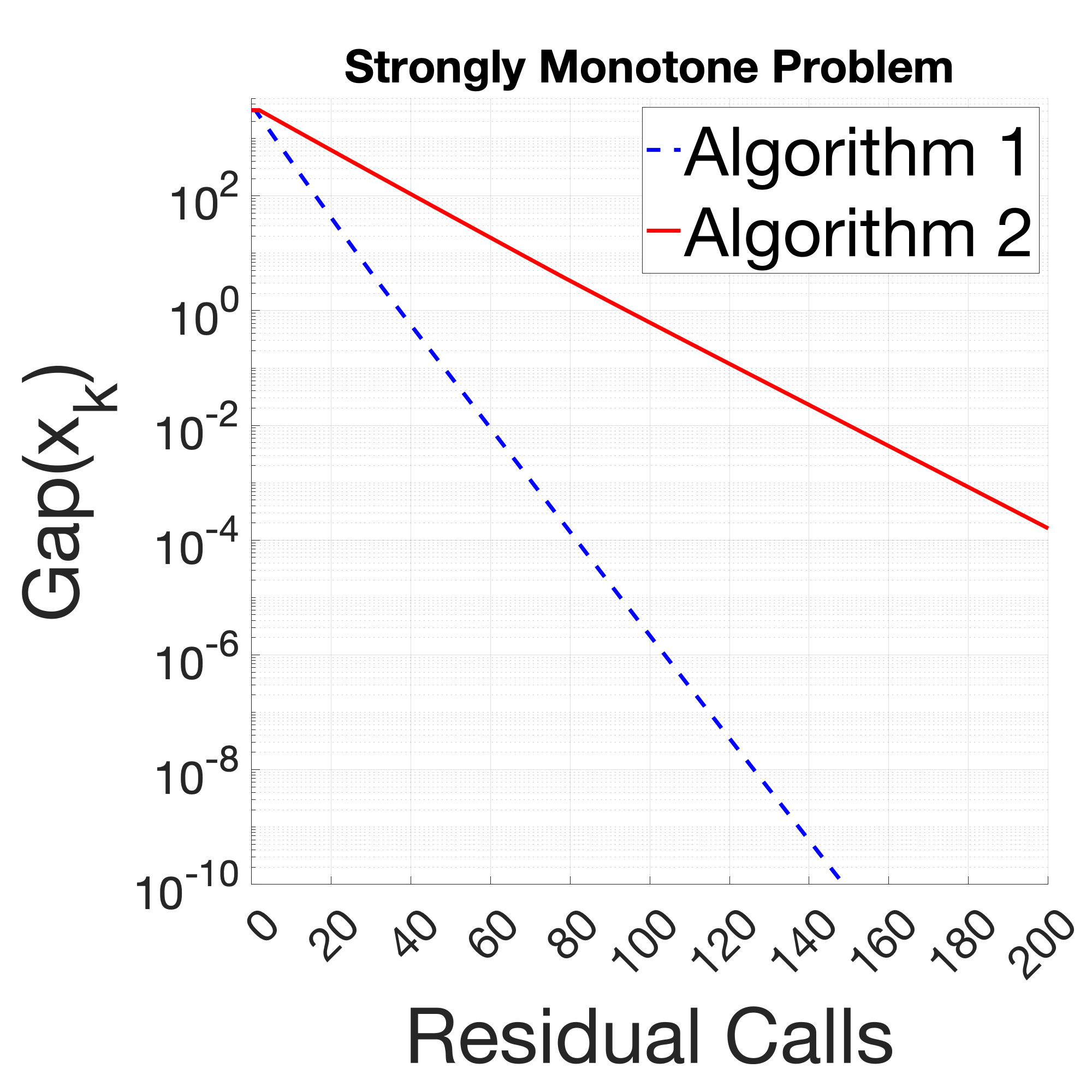}
        \caption{}
        \label{fig:smOracle}
    \end{subfigure}%
    \hfill
    \begin{subfigure}[t]{0.4\columnwidth}
        \centering
        \includegraphics[width=\linewidth]{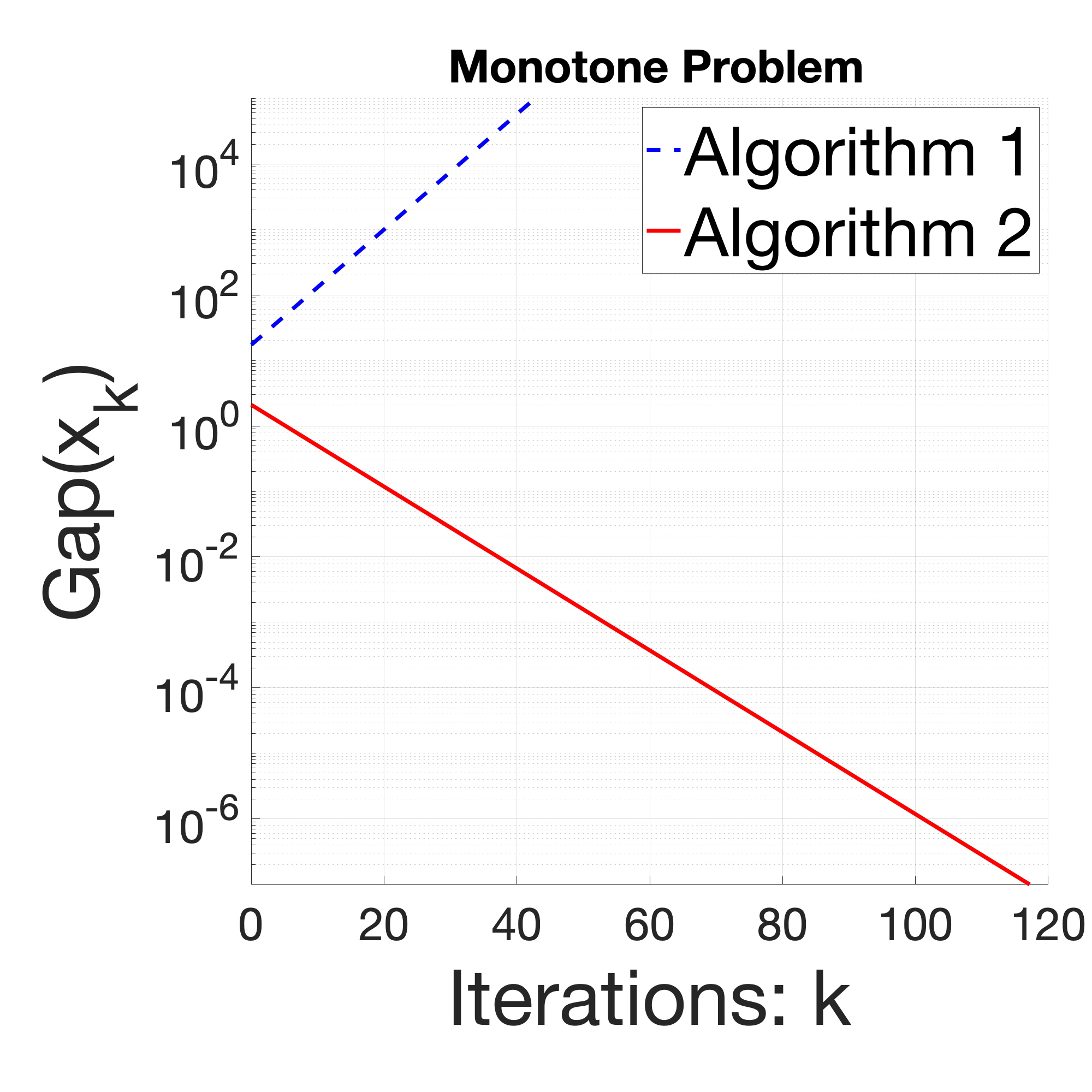}
        \caption{}
        \label{fig:monotone}
    \end{subfigure}

    \caption{%
    (a) Gap versus residual evaluations for the strongly monotone network
    example with $\eta=1$, $\alpha=0.1860$, and
    $x_0=1000\cdot\mathbf{1}$.
    (b) Gap versus iterations for the  monotone example. The one-step method diverges, whereas the predictor-corrector
    method converges for the monotone rotational residual.}
    \label{fig:networkComparisons}
\end{figure}

{\bf Example 2.} We next give a rank-deficient example illustrating the need for the predictor-corrector step. Let $\eta=1$,
\[
K=\{u\in\R^2:u_2=0\},\quad
A=\begin{pmatrix}1&0\\1&0\end{pmatrix},\quad
B=\begin{pmatrix}0&-1\\0&1\end{pmatrix},
\]
and define $H(x)=Ax$ and $Q(x)=Bx$. The matrix $A$ is singular with a kernel described by $(0,t)^\intercal$, and $Q(0,t) = (-t,t)^\intercal$. Hence $Q$ is not constant on the fibers of $H$, and the change of variables $u=H(x)$ does not produce a well-defined operator $Q\circ H^{-1}$.
The unique solution is $x^\star=0$, and the residual is
\[
R_\eta(x)
=H(x)-\proj_K\bigl(H(x)-Q(x)\bigr)
=Jx,\quad
J=\begin{pmatrix}0&-1\\1&0\end{pmatrix}.
\]
Because $J^\intercal=-J$ and $J^\intercal J=I$, the residual is monotone and $1$-Lipschitz, but it is neither strongly monotone nor cocoercive.
%$\ip{R_\eta(x)}{x-x^\star}=0$ and $\norm{R_\eta(x)}^2=\norm{x}^2>0,$$(x\neq0).$
For Algorithm~\ref{alg:gvi-residual},
$\norm{x_{k+1}}^2=(1+\alpha^2)\norm{x_k}^2,$
so the one-step method diverges for every $\alpha>0$ unless $x_0=0$. In contrast, Algorithm~\ref{alg:gvi-eg} satisfies
$\norm{x_{k+1}}^2
=(1-\alpha^2+\alpha^4)\norm{x_k}^2$
and therefore converges for every $0<\alpha<1=L_R^{-1}$. Figure~\ref{fig:monotone} illustrates this distinction for $\alpha=1/\sqrt{2}$ and $x_0 = 10 \cdot \mathbf{1}$.

\section{GQVI Extension and Future Directions}

We conclude by extending the residual framework to decision-dependent feasible sets. Let
$K:\R^n\rightrightarrows\R^n$ have nonempty, closed, and convex values. The associated generalized quasi-variational inequality (GQVI) seeks $x^*$ such that
$H(x^*)\in K(x^*)$ and
$$\ip{y-H(x^*)}{Q(x^*)}\geq 0,\quad \forall y\in K(x^*).$$
Define the moving-set residual
\begin{equation}
R_\eta^q(x)
:=
H(x)-\proj_{K(x)}\bigl(H(x)-\eta Q(x)\bigr).
\end{equation}
Then $x^*$ solves the GQVI if and only if $R_\eta^q(x^*)=0$. Consequently, Algorithms~\ref{alg:gvi-residual} and~\ref{alg:gvi-eg} apply with $R_\eta$ replaced by
$R_\eta^q$, provided the corresponding residual assumptions hold.

A sufficient condition for Lipschitz continuity of this residual is that, for some $L_v,L_K>0$,
\begin{equation}
\norm{\proj_{K(x)}(u)-\proj_{K(y)}(v)}
\leq
L_v\norm{u-v}+L_K\norm{x-y}.
\end{equation}
If $H$ and $Q$ are Lipschitz continuous with constants $L_H$ and $L_Q$, respectively, then
\begin{align*}
\norm{R_\eta^q(x)-R_\eta^q(y)}
&\leq
\norm{H(x)-H(y)}+L_K\norm{x-y}
+L_v\norm{H(x)-H(y)-\eta\bigl(Q(x)-Q(y)\bigr)}
 \\
&\leq
\left(L_H+L_v(L_H+\eta L_Q)+L_K\right)\norm{x-y}.
\end{align*}
Thus, $R_\eta^q$ is Lipschitz continuous with
$L_R^q\leq L_H+L_v(L_H+\eta L_Q)+L_K.$
This condition applies to structured decision-dependent constraints, including moving box constraints and parameterized polyhedral feasible sets, under standard regularity assumptions on the associated projection mapping \cite{BednarczukRutkowski2021}.

Overall, the proposed residual framework provides implementable decision-space methods for transformed-state equilibria without requiring inversion of $H$, while also accommodating decision-dependent feasibility under suitable regularity conditions. Future work will develop verifiable conditions for broader classes of moving feasible sets and investigate stochastic and distributed residual-feedback methods.

% --- Bibliography ---
\bibliographystyle{IEEEtran}
\bibliography{papers}
\end{document}